\documentclass{article}
\usepackage{romp}
\usepackage{graphicx}
\usepackage{amsmath}
\usepackage{cite}
\usepackage[utf8]{inputenc}
\usepackage[T1]{fontenc}
\usepackage{caption}

\usepackage[section]{placeins}
\DeclareMathOperator{\sgn}{sgn}

\title{Quantum logistic map considered as discrete-time Heisenberg equation}
\author{ Maciej Janowicz\thanks{Corresponding author.} \\ Department of Applied Mathematics, Institute of Information Technology, \\ Warsaw University of Life Sciences - SGGW,\\ ul. Ciszewskiego 8, 02-786 Warsaw, Poland  \\ e-mail: maciej\_janowicz@sggw.edu.pl  \\[2ex]
         Arkadiusz Orłowski
                      \\ Institute of Mathematics and Cryptology, \\ Faculty of Cybernetics, \\ Military University of Technology, \\
                      ul. gen. Sylwestra Kaliskiego 2, 00--908 Warsaw, Poland\\ e-mail: arkadiusz.orlowski@wat.edu.pl}

\begin{document}

\maketitle

\begin{abstract}
The scalar iterates of the logistic map are represented as multiplication
operators on $L^2([0,1])$, and their fixed matrix elements in the normalized
shifted-Legendre basis are investigated.  Three parameter regimes admit
complete analytical control.  At $r=5/2$, every fixed matrix element
converges to $3\delta_{kl}/5$, where $\delta_{kl}$ is the Kronecker delta.
At $r=16/5$, an exact phase-basin decomposition of the attracting period-two
orbit yields phase-resolved limits for the even and odd subsequences.  At
$r=4$, an exact Chebyshev-moment representation shows that every fixed matrix
element approaches $\delta_{kl}/2$ at the rate $O(4^{-n})$.  The chaotic case
$r=37/10$ is examined solely through a controlled finite numerical refinement
study.  The analytical statements concern fixed matrix elements for specified
basis indices and do not imply operator-norm convergence.  Complementary
finite-time diagnostics include matrix-element time series, a
bifurcation-style diagram, a time-averaged mean intensity, a normalized
second-order intensity moment, and normalized scalar OTOC-type commutator
correlation matrices.  A separate finite-dimensional recursion
$X_{k+1}=R X_k(I-X_k)R^\dagger$, with a fixed matrix $R$ and diagonal or
tridiagonal amplitude profiles, is explored by finite-time numerics.  A
regularized phase-space lift is included as a controlled visualization.
\end{abstract}

\noindent
    {\bf Keywords:} logistic map, multiplication operator, shifted-Legendre basis,
    matrix elements, Chebyshev representation, finite-time diagnostics,
    operator-valued iteration

\section{Introduction}

\noindent
The logistic map serves as a paradigmatic model of low-dimensional
nonlinear dynamics~\cite{ColletEckmann,Cvitanovic,Devaney,EckmannRuelle1985,Feigenbaum1978,KantzSchreiber,May1976,Ott,Strogatz}.  Here, its scalar
iterates are used to define a family of multiplication operators.  This keeps
the nonlinear dynamics at the level of functions on $[0,1]$ while allowing
exact statements about fixed matrix elements in a standard orthonormal basis.
Matrix-valued logistic maps provide a related but distinct direction of
generalization~\cite{PZ2025}.  For prior use of the term ``quantum logistic
map,'' see Ref.~\cite{Goggin1990QuantumLogisticMap}.  More broadly, the
relation between classical and quantum chaos
has long been studied through semiclassical structure and spectral or
dynamical diagnostics~\cite{Berry1977,Gutzwiller,Haake}.

For geometric context, the local canonical phase-space lift of the scalar
map and a regularized visualization of a procedural cat silhouette are also
considered.  The regularization removes the critical-line singularity but is
explicitly neither symplectic nor area preserving.

The contributions concern four parameter regimes.  At $r=5/2$, almost-everywhere
convergence to the attracting fixed point gives a limit for every fixed basis
element.  At $r=16/5$, an exact phase-basin decomposition gives the two
subsequence limits associated with the attracting period-two orbit.  At
$r=4$, a finite Chebyshev representation yields an exact evaluation and the
fixed-element asymptotic $O_{kl}(4^{-n})$.  At $r=37/10$, the conclusions are
restricted to finite, controlled numerical evidence: no theorem about an
invariant or physical measure, mixing, ergodicity, or continuum convergence is
claimed.  Across these regimes, numerical diagnostics identify the loss of
resolution that can affect fixed global quadrature rules at late iterations.

These results concern the multiplication-operator dynamics and fixed basis
elements.  They do not establish operator-norm convergence.  A separate
operator-valued recursion involving a fixed operator $R$ is examined through
exploratory finite-dimensional, finite-time numerics; no general claim about
its boundedness or asymptotic behavior is made here.

\section{Mathematical setup}

\subsection{Classical iterates and multiplication operators}

For $0<r\leq4$, define
\begin{equation}
 p_0(u;r)=u,\qquad
 p_{n+1}(u;r)=r\,p_n(u;r)\bigl(1-p_n(u;r)\bigr),
 \qquad u\in[0,1].
 \label{eq:scalar-logistic-iterates}
\end{equation}
The parameter argument will be suppressed when no ambiguity can arise.

Let
\[
 \mathcal H=L^2([0,1],du)
\]
and let $M_u$ denote multiplication by the coordinate,
\[
 (M_u f)(u)=u f(u).
\]
The multiplication-operator iterates are defined by
functional calculus as
\begin{equation}
 X_n=p_n(M_u;r)=M_{p_n(\cdot;r)}.
 \label{eq:multiplication-operator-iterates}
\end{equation}
Thus $X_n$ is multiplication by the scalar function $p_n(\cdot;r)$;
this is not a finite-matrix recursion. In the limited terminology used
in the title, the nonlinear operator update
\[
X_{n+1}=rX_n(I-X_n)
\]
is regarded as a discrete-time Heisenberg-type equation for the
observable $X_n$. It is not generated by unitary conjugation and should
not be confused with the standard Heisenberg equation of motion.

\subsection{Canonical phase-space lift and regularized visualization}

Introducing an auxiliary momentum coordinate $\pi_n$, the scalar update has
the local phase-space lift
\begin{equation}
 \begin{aligned}
  x_{n+1}&=r x_n(1-x_n),\\
  \pi_{n+1}&=\frac{\pi_n}{r(1-2x_n)}.
 \end{aligned}
 \label{phase_space_eqns}
\end{equation}
For $x_n\neq1/2$, its Jacobian matrix is triangular up to the lower-left
entry, and its determinant is
\[
 r(1-2x_n)\frac{1}{r(1-2x_n)}=1.
\]
Equation~\eqref{phase_space_eqns} is therefore a local canonical,
area-preserving lift away from the critical line.  This statement does not
imply that the map is globally invertible.  The momentum update is singular
at $x_n=1/2$.

For the visualization in Figure~\ref{fig:fig1}, the regularized update
\begin{equation}
 \begin{aligned}
  x_{n+1}&=r x_n(1-x_n),\\
  \pi_{n+1}&=
  \frac{\pi_n\,\operatorname{sgn}(1-2x_n)}
  {r\sqrt{(1-2x_n)^2+\epsilon^2}},
 \end{aligned}
 \label{eq:regularized-phase-space}
\end{equation}
is used instead, with $\operatorname{sgn}(0)=0$.  This prescription keeps the update finite
on the critical line but does not make it continuous there.  Its Jacobian determinant is
\[
 \frac{|1-2x_n|}{\sqrt{(1-2x_n)^2+\epsilon^2}}.
\]
Consequently, for $\epsilon>0$ the regularized map is not exactly symplectic
or area preserving.

\begin{figure}[htbp]
 \centering
 \includegraphics[width=\textwidth]{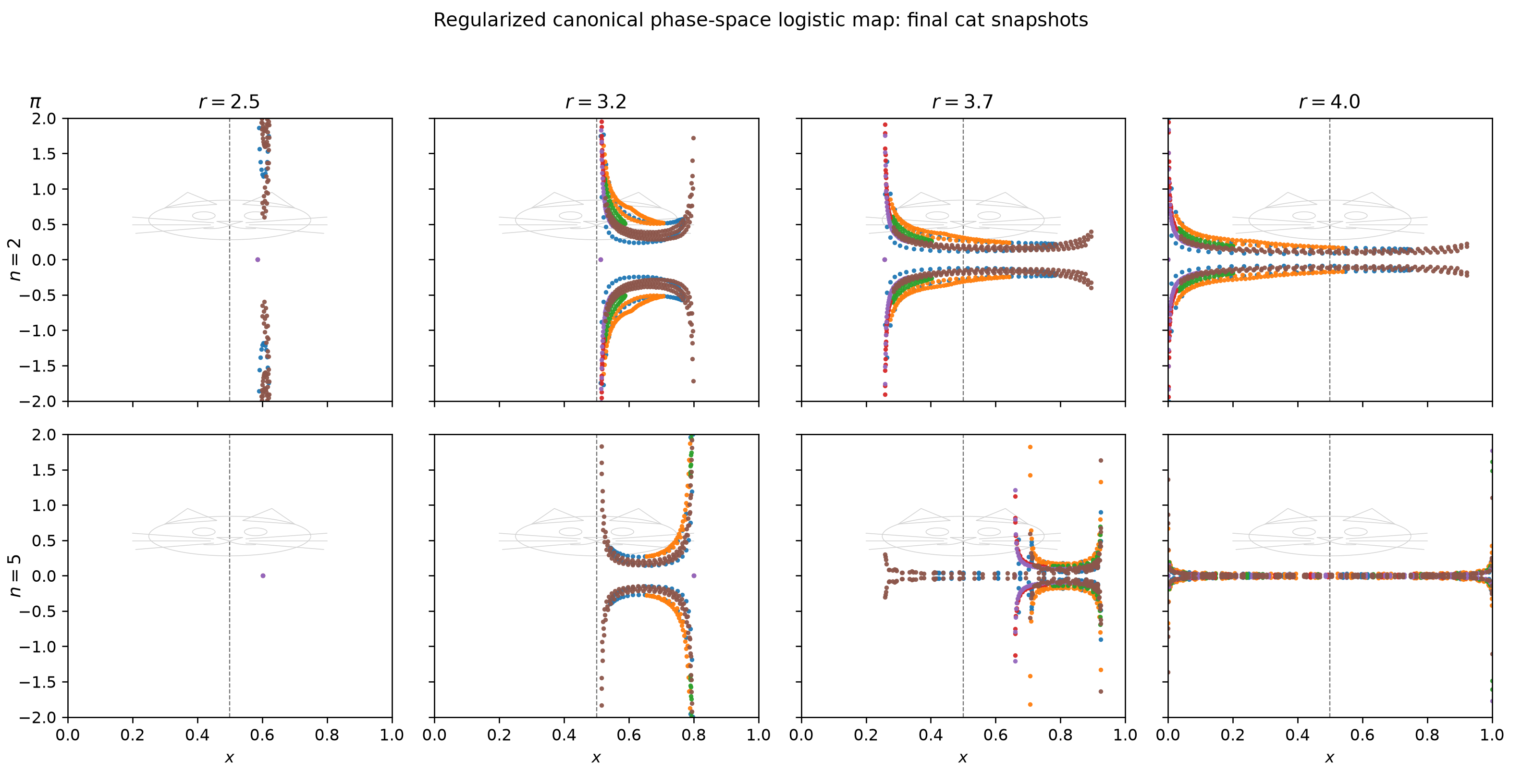}
 \caption{Regularized phase-space logistic-map snapshots for
 $r=5/2,16/5,37/10$, and $4$ (columns), after $n=2$ (top row) and $n=5$
 (bottom row).  The plotted points originate from a deterministic procedural
 cat silhouette; its faint initial outline is included in every panel for
 reference.  The regularized map in Eq.~\eqref{eq:regularized-phase-space}
 is used with $\epsilon=10^{-6}$.  The display window is fixed at
 $x\in[0,1]$ and $\pi\in[-2,2]$, so evolved points outside the momentum
 window are clipped; the dashed vertical line marks $x=1/2$.}
 \label{fig:fig1}
\end{figure}

\subsection{Shifted-Legendre representation}

Let $P_j$ be the standard Legendre polynomial of degree $j$.  The normalized
shifted-Legendre basis
\begin{equation}
 \phi_j(u)=\sqrt{2j+1}\,P_j(2u-1),\qquad j=0,1,2,\ldots,
 \label{eq:shifted-legendre-basis}
\end{equation}
is used; it is orthonormal in $\mathcal H$.  In this basis, $M_u$ has the symmetric
tridiagonal matrix
\begin{equation}
 \langle \phi_m,M_u\phi_n\rangle=
 \begin{cases}
 \displaystyle\frac12, & m=n,\\[2mm]
 \displaystyle\frac{n+1}{2\sqrt{(2n+1)(2n+3)}}, & m=n+1,\\[3mm]
 \displaystyle\frac{n}{2\sqrt{(2n-1)(2n+1)}}, & m=n-1,\\[3mm]
 0, & \text{otherwise}.
 \end{cases}
 \label{matrix_elements_zero}
\end{equation}
The symmetry is also explicit in the equivalent off-diagonal formula
$\langle\phi_n,M_u\phi_{n+1}\rangle=(n+1)/
[2\sqrt{(2n+1)(2n+3)}]$.

For fixed nonnegative integers $k,l$, define
\begin{equation}
 x_{kl}(n;r)=\langle\phi_k,X_n\phi_l\rangle
 =\int_0^1\phi_k(u)p_n(u;r)\phi_l(u)\,du.
 \label{eq:fixed-basis-matrix-elements}
\end{equation}
These are matrix elements of a multiplication operator, not elements of a
density matrix.  If $P_N$ denotes the orthogonal projection onto
$\operatorname{span}\{\phi_0,\ldots,\phi_{N-1}\}$, then the corresponding
finite matrix is the representation of $P_NX_nP_N$.  Applying the nonlinear
logistic recursion directly to the compression $P_NM_uP_N$ is not
automatically equivalent to compressing the functional-calculus result.  In
general, with the relevant identity operators and domains understood,
\begin{equation}
 p_n(P_NM_uP_N;r)\neq P_Np_n(M_u;r)P_N.
 \label{eq:projection-caveat}
\end{equation}

\subsection{A separate operator-valued extension}

For motivation, one may separately consider
\begin{equation}
 \mathcal X_{n+1}=R\bigl(\mathcal X_n-\mathcal X_n^2\bigr)R^\dagger,
 \label{eq:operator-extension-setup}
\end{equation}
where $R$ is fixed.  This is not the definition of the multiplication-operator
sequence in Eq.~\eqref{eq:multiplication-operator-iterates}.  In the scalar
control case $R=sI$, Eq.~\eqref{eq:operator-extension-setup} becomes
\[
 \mathcal X_{n+1}=|s|^2\bigl(\mathcal X_n-\mathcal X_n^2\bigr),
\]
so the effective scalar parameter is $r=|s|^2$; a positive scalar parameter
$r$ corresponds to amplitude $\sqrt r$, up to phase.  The detailed proved
 results below concern Eq.~\eqref{eq:multiplication-operator-iterates} and the
 fixed elements in Eq.~\eqref{eq:fixed-basis-matrix-elements}.  A separate
 finite-dimensional reconnaissance of the extension is given in
 Section~\ref{sec:finite-operator-extension}.

\section{Analytically controlled regular regimes}

\subsection{Attracting fixed point at $r=5/2$}

\begin{proposition}{Proposition}
\label{prop:r25-fixed-element}
For the recursion in Eq.~\eqref{eq:scalar-logistic-iterates} with $r=5/2$,
\[
 p_n(u;5/2)\longrightarrow\frac35
\]
for Lebesgue-almost every $u\in[0,1]$.  Consequently, for every fixed pair
of nonnegative indices $k,l$,
\begin{equation}
 \lim_{n\to\infty}x_{kl}(n;5/2)=\frac35\delta_{kl}.
 \label{eq:r25-fixed-element-limit}
\end{equation}
\end{proposition}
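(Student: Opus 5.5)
The plan is to prove the pointwise statement by direct analysis of the scalar map $f(x)=\tfrac52x(1-x)$ on $[0,1]$, and then pass to matrix elements by dominated convergence.

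\emph{Step 1 (exceptional set).} The map sends $[0,1]$ into $[0,5/8]$, since its maximum is $f(1/2)=5/8$. The only points whose orbits avoid the attracting fixed point $x^*=1-1/r=3/5$ are those that eventually land on the repelling fixed point $0$. Now $f(x)=0$ exactly when $x\in\{0,1\}$, and $f(x)=1$ has no solution because $5/8<1$. Hence the exceptional set is $\{u: p_n(u)=0 \text{ for some } n\}=\{0,1\}$, which is finite and hence Lebesgue-null.

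\emph{Step 2 (convergence on $(0,1)$).} For $u\in(0,1)$, one iterate gives $p_1(u)\in(0,5/8]$. I will use the invariant interval $J=(0,5/8]$ and show that every orbit in $J$ converges to $3/5$. The simplest route is to split into two ranges.
\begin{itemize}
\item On $(0,3/5)$ we have $f(x)>x$, and $f$ is increasing on $(0,1/2]$. An orbit starting below $1/2$ therefore increases strictly until it enters $[1/2,5/8]$. It cannot stay below $1/2$ forever, because any limit point would be a fixed point in $(0,1/2]$, and none exists.
\item On $I=[1/2,5/8]$ we have $f(I)\subset[f(5/8),f(1/2)]=[75/128,5/8]\subset I$. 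Moreover $|f'(x)|=\tfrac52|1-2x|\le\tfrac52\cdot\tfrac14=\tfrac58<1$ on $I$. So $f$ is a contraction of $I$, and the contraction mapping theorem gives $p_n(u)\to 3/5$.
\end{itemize}
This yields pointwise convergence for every $u\in(0,1)$, and in particular for almost every $u$.

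\emph{Step 3 (matrix elements).} Since $0\le p_n\le 1$, the integrand in Eq.~\eqref{eq:fixed-basis-matrix-elements} is dominated by $|\phi_k\phi_l|$. This function is integrable because the $\phi_j$ are polynomials on a compact interval. Dominated convergence then gives
\[
x_{kl}(n;5/2)\to \tfrac35\int_0^1\phi_k\phi_l\,du=\tfrac35\delta_{kl},
\]
where the last equality uses the orthonormality of the basis in Eq.~\eqref{eq:shifted-legendre-basis}.

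\emph{Main obstacle.} The only delicate part is Step 2, namely global convergence on all of $(0,1)$ rather than just near $3/5$. Monotonicity below $1/2$, followed by entry into the contracting invariant interval $[1/2,5/8]$, handles it cleanly. I need to double-check that $75/128\ge 1/2$ so that $I$ is genuinely invariant; this holds, since $75/128>64/128$.
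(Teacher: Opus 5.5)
Your proposal is correct and follows the paper's proof. Like the paper, you identify the Lebesgue-null exceptional set $\{0,1\}$, establish pointwise convergence to $3/5$ on $(0,1)$, and pass to matrix elements by dominated convergence with majorant $|\phi_k\phi_l|$ and orthonormality; the only difference is that the paper just invokes ``standard fixed-point dynamics'' for global convergence on $(0,1)$, whereas you prove it explicitly (monotone increase below $1/2$, then the invariant interval $[1/2,5/8]$ on which $|f'|\le 5/8$), and that argument checks out.
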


\noindent\textit{Proof.}
Let $f(u)=(5/2)u(1-u)$.  Its nonzero fixed point is $u_*=3/5$, and the
standard fixed-point dynamics gives $f^n(u)\to u_*$ for every
$u\in(0,1)$.  The endpoints are exceptional: $0$ is fixed and $1$ maps to
$0$.  Thus the endpoint orbit and all its preimages in $[0,1]$ form here the
finite set $\{0,1\}$, which has Lebesgue measure zero.  Since $[0,1]$ is
forward invariant, $0\leq p_n(u;5/2)\leq1$.  For fixed $k,l$, the continuous
function $|\phi_k\phi_l|$ is integrable and dominates
$|\phi_k p_n\phi_l|$.  Dominated convergence and orthonormality therefore
give
\[
 \lim_{n\to\infty}x_{kl}(n;5/2)
 =\frac35\int_0^1\phi_k(u)\phi_l(u)\,du
 =\frac35\delta_{kl}.
\]
No quantitative rate follows from this argument; in particular, the local
multiplier at $3/5$ does not control long endpoint transients uniformly.
The proposition also makes no assertion of operator-norm convergence.
This completes the proof.

For finite iterations, two endpoint-resolving procedures are compared,
namely adaptive quadrature with an explicit symmetric endpoint partition and a
composite Gauss rule on geometrically shrinking endpoint intervals.  Their
agreement is an independent numerical check, not a proof of
Eq.~\eqref{eq:r25-fixed-element-limit}.  A fixed global Gauss rule becomes
unreliable once the late-iteration integrand develops structure finer than
its fixed node spacing.

\subsection{Exact period-two phases at $r=16/5$}

Set
\begin{equation}
 f(u)=\frac{16}{5}u(1-u),\qquad
 q=\frac{11}{16},\qquad
 a=\frac{21-\sqrt{21}}{32},\qquad
 b=\frac{21+\sqrt{21}}{32}.
 \label{eq:r32-cycle-points}
\end{equation}
Then $q$ is the nonzero fixed point, $f(a)=b$, $f(b)=a$, and the multiplier
of the attracting two-cycle is
\[
 f'(a)f'(b)=\frac4{25}.
\]
Define
\begin{equation}
 c_1=\frac5{16},\qquad
 c_{j+1}=\frac{1-\sqrt{1-(5/4)c_j}}2,
 \qquad j\geq1,
 \label{eq:r32-boundary-sequence}
\end{equation}
and introduce the open intervals
\begin{equation}
 I_j=(c_{j+1},c_j),\qquad
 I_j^*=(1-c_j,1-c_{j+1}).
 \label{eq:r32-phase-intervals}
\end{equation}

\begin{theorem}{Theorem}
\label{thm:r32-phase-basins}
Let
\begin{equation}
 E=\{0,1,q\}\cup\{c_j,1-c_j:j\geq1\},
 \label{eq:r32-exceptional-set}
\end{equation}
with duplicates removed.  For every $u\in[0,1]\setminus E$, the even and
odd limits of $p_n(u;16/5)$ exist.  On the central interval $(c_1,q)$,
\[
 p_{2m}(u;16/5)\to a,\qquad p_{2m+1}(u;16/5)\to b.
\]
On $I_j$ and $I_j^*$ the limiting phase is $(b,a)$ for odd $j$ and $(a,b)$
for even $j$, where the ordered pair denotes the even and odd limits.
\end{theorem}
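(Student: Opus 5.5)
The plan is to use three facts. First, $f$ is monotone decreasing on a small forward-invariant interval $K$ that contains the two-cycle. Second, this makes $g=f\circ f$ increasing there, so $g$-orbits are monotone. Third, the intervals $I_j$, $I_j^*$ are successive preimages of the central interval, and each preimage step shifts the phase by one. Two identities from Eq.~\eqref{eq:r32-boundary-sequence} are used throughout. Since $q=11/16=1-c_1$, the central interval is $(c_1,q)=(c_1,1-c_1)$. Moreover $f(c_1)=(16/5)(5/16)(11/16)=q$. Finally, $c_{j+1}$ is the preimage of $c_j$ lying in $(0,1/2)$, since solving $f(u)=c$ gives $u=(1-\sqrt{1-5c/4})/2$; hence $f(c_{j+1})=c_j$.

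\textbf{Step 1: the central interval.} Let $A=(c_1,q)$. The point $1/2$ lies in $A$, $f(c_1)=f(1-c_1)=q$, and $f(1/2)=4/5$, so $f(A)=(q,4/5]$. On $[1/2,1]$ the map $f$ is decreasing, and $f(4/5)=64/125=0.512$. Hence $f\bigl((q,4/5]\bigr)=[0.512,q)\subset A$. Set $K=[0.512,4/5]$. Then $f(K)\subset K$, and since $0.512>1/2$, $f$ is strictly decreasing on $K$, so $g=f^2$ is strictly increasing on $K$. The fixed points of $g$ are the real roots of a degree-four polynomial, namely $0$, $q$, $a$, $b$ (with $a\approx0.513$ and $b\approx0.799$). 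Only $a$ lies in $[0.512,q)$. Evaluating $g-\mathrm{id}$ at one test point on each side of $a$ then gives $g(u)>u$ on $[0.512,a)$ and $g(u)<u$ on $(a,q)$. An increasing map therefore produces a monotone, bounded $g$-orbit that stays on the same side of $a$ (if $u<a$ then $g(u)<g(a)=a$, and similarly above). Its limit is a fixed point of $g$ in the closure of that side, which can only be $a$. So for $u\in A$ we have $p_2(u)\in[0.512,q)$ and $p_{2m}(u)=g^{m-1}(p_2(u))\to a$. Applying continuity of $f$ gives $p_{2m+1}(u)\to f(a)=b$.

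\textbf{Step 2: the preimage intervals.} The sequence $c_j$ is strictly decreasing with $c_j\to0$. Indeed, the inverse branch $h(c)=(1-\sqrt{1-5c/4})/2$ satisfies $h(c)<c$ on $(0,5/16]$ and has $0$ as its only fixed point there. Since $f$ is increasing on $[0,1/2]$ and $f(c_{j+1})=c_j$, the map $f$ sends $I_j$ bijectively onto $I_{j-1}$, with the convention $I_0=A$ (note $f(c_2)=c_1$ and $f(c_1)=q$). The symmetry $f(1-u)=f(u)$ gives $f(I_j^*)=f(I_j)$. Because $p_n(u)=p_{n-1}(f(u))$, the (even, odd) limit pair of $u$ is the reversed pair of $f(u)$. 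Induction on $j$, starting from $(a,b)$ on $I_0$, yields $(b,a)$ for odd $j$ and $(a,b)$ for even $j$.

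\textbf{Step 3: covering.} It remains to check that $[0,1]\setminus E$ is exactly $A\cup\bigcup_j(I_j\cup I_j^*)$. Since $c_j\downarrow0$, the intervals $I_j$ tile $(0,c_1)$ up to the points $c_j$, and the intervals $I_j^*$ tile $(1-c_1,1)=(q,1)$ up to the points $1-c_j$. Together with $A=(c_1,q)$ and the removed set $E$, this covers $[0,1]$.

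I expect the main obstacle to be Step 1, namely identifying an invariant interval on which $f$ is monotone. The key numerical coincidence is $f(4/5)=0.512>1/2$. Once that holds, the monotone-orbit argument replaces any multiplier estimate, and the sign check of $g-\mathrm{id}$ is a routine polynomial computation.
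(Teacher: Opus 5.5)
Your proof is correct and follows essentially the same route as the paper. Both arguments use three ingredients. The first is an increasing second iterate on the region reached after two steps: you use the $f$-invariant interval $K=[64/125,4/5]$ on which $f$ decreases, while the paper computes $g([c_1,q])=[64/125,q]$ and $g'>0$ there. The second is that the sign of $g(u)-u$ is fixed by its four fixed points, and the third is the phase reversal under the increasing left branch together with the reflection $f(1-u)=f(u)$. The differences are cosmetic: the paper reads the signs off the explicit factorization, and it additionally notes that $E$ is the full backward orbit of $q$, which the theorem as stated does not require.
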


\noindent\textit{Proof.}
The lower inverse branch of $f$ is
\[
 h(y)=\frac{1-\sqrt{1-(5/4)y}}2.
\]
It satisfies $f(c_1)=q$, $f(c_{j+1})=c_j$, and $0<h(y)<y$ for
$0<y<q$.  Hence $c_j\downarrow0$.  The two inverse images of $q$ are
$c_1$ and $q=1-c_1$, and the inverse images of each $c_j$ are
$c_{j+1}$ and $1-c_{j+1}$.  For $j\geq2$, $1-c_j>4/5$, so it has no
real inverse image under $f$, whose maximum on $[0,1]$ is $4/5$.
Consequently, Eq.~\eqref{eq:r32-exceptional-set} consists of the endpoints
and the complete backward orbit of $q$.  It is countable and therefore has
Lebesgue measure zero.

Let $g=f^2$.  Direct factorization gives
\begin{equation}
 g(u)-u=-\left(\frac{16}{5}\right)^3u(u-a)(u-q)(u-b).
 \label{eq:r32-second-iterate-factorization}
\end{equation}
On $[c_1,q]$, the only critical point of $g$ is $1/2$,
$g(c_1)=g(q)=q$, and $g(1/2)=f(4/5)=64/125$.  Thus
$g([c_1,q])=[64/125,q]$.  On this image, both $u$ and $f(u)$ exceed
$1/2$, so $g'(u)=f'(f(u))f'(u)>0$.  The factorization shows that
$g(u)>u$ on $(1/2,a)$ and $g(u)<u$ on $(a,q)$.  Since the increasing map
$g$ fixes $a$, every orbit in $[64/125,q)$ converges monotonically to $a$.
It follows that the central interval has phase $(a,b)$.

On the increasing left branch, $f$ maps $I_1$ onto $(c_1,q)$ and maps
$I_j$ onto $I_{j-1}$ for $j\geq2$.  Each passage to the preceding interval
reverses the alignment of the original even and odd subsequences, producing
the stated alternation.  Finally, $f(u)=f(1-u)$, so each reflected interval
$I_j^*$ has the same phase as $I_j$.  These intervals and the central
interval are precisely the connected components of $[0,1]\setminus E$.
This completes the proof.

Define the phase-basin set
\begin{equation}
 S_{ba}=\bigcup_{\substack{j\geq1\\ j\ \text{odd}}}(I_j\cup I_j^*).
 \label{eq:r32-sba}
\end{equation}
Outside $E$, Theorem~\ref{thm:r32-phase-basins} gives the pointwise limits
\[
 A(u)=a+(b-a)\mathbf 1_{S_{ba}}(u),\qquad
 B(u)=b-(b-a)\mathbf 1_{S_{ba}}(u).
\]
For fixed $k,l$, define the subsequential matrix-element limits
\[
 x_{kl}^{\rm even}(16/5)=\lim_{m\to\infty}x_{kl}(2m;16/5),\qquad
 x_{kl}^{\rm odd}(16/5)=\lim_{m\to\infty}x_{kl}(2m+1;16/5).
\]
Because $0\leq p_n\leq1$ and $\phi_k\phi_l$ is integrable, dominated
convergence yields
\begin{align}
 x_{kl}^{\rm even}(16/5)
 &=a\delta_{kl}+(b-a)\int_{S_{ba}}\phi_k(u)\phi_l(u)\,du,\label{eq:r32-even-limit}\\
 x_{kl}^{\rm odd}(16/5)
 &=b\delta_{kl}-(b-a)\int_{S_{ba}}\phi_k(u)\phi_l(u)\,du.\label{eq:r32-odd-limit}
\end{align}
Since $a+b=21/16$,
\begin{equation}
 x_{kl}^{\rm even}(16/5)+x_{kl}^{\rm odd}(16/5)
 =\frac{21}{16}\delta_{kl}.
 \label{eq:r32-limit-sum}
\end{equation}
Values assigned on the countable boundary set $E$ do not affect these
Lebesgue integrals.  Equations~\eqref{eq:r32-even-limit}--\eqref{eq:r32-limit-sum}
are fixed-element subsequential limits for fixed $k,l$, not operator-norm
limits, and no quantitative convergence rate is asserted.

The limiting basin integral can be evaluated interval by interval using a
polynomial antiderivative of $\phi_k\phi_l$.  If intervals through index $J$
are retained, the unrepresented endpoint tails obey
\[
 |\Delta I_{kl}|\leq
 2c_{J+1}\sqrt{(2k+1)(2l+1)},
\]
because $|P_j(y)|\leq1$ on $[-1,1]$.  Reflected intervals are evaluated via
\[
 \phi_k(1-u)\phi_l(1-u)=(-1)^{k+l}\phi_k(u)\phi_l(u),
\]
which avoids subtracting nearly equal antiderivative values close to $u=1$.
Independent adaptive and partitioned composite calculations agree for the
audited finite iterations.  Fixed global Gauss quadrature is therefore not
used as a reference method in this regime.

\section{Chaotic finite refinement and an exactly solvable endpoint}

\subsection{Finite numerical refinement at $r=37/10$}

For fixed nonnegative indices $k,l$, the raw ensemble matrix element is
\begin{equation}
 x_{kl}(n;37/10)
 =
 \int_0^1
 \phi_k(u)\,p_n(u;37/10)\,\phi_l(u)\,du .
 \label{eq:r37-raw-element}
\end{equation}
The finite quantities considered here are its deterministic quadrature
approximations, their Ces\`aro averages through a horizon $T$,
\begin{equation}
 \overline{x}_{kl}^{(N,s)}(T)
 =
 \frac{1}{T+1}\sum_{n=0}^{T}x_{kl}^{(N,s)}(n),
 \qquad
 x_{kl}^{(N,s)}(n)
 =
 \frac{1}{N}\sum_{j=0}^{N-1}
 \phi_k(u_j)\,p_n(u_j;37/10)\,\phi_l(u_j),
 \label{eq:r37-finite-cesaro}
\end{equation}
and the inclusive late-window averages
\begin{equation}
 L_{kl}^{(N,s)}(a,b)
 =
 \frac{1}{b-a+1}\sum_{n=a}^{b}x_{kl}^{(N,s)}(n),
 \qquad
 u_j=\frac{j+s}{N}.
 \label{eq:r37-late-window}
\end{equation}
Transfer-operator, invariant-measure, resonance, and finite-rank
approximation methods provide standard background for such questions
\cite{Baladi,LasotaYorke,Ruelle1986,Ulam}; the calculations below remain
finite numerical diagnostics and do not invoke those results as proofs.
Deterministic equal-stratum sampling was used with
$N=2^{15},2^{16},2^{17},2^{18}$ and within-stratum offsets
$s=1/4,1/2,3/4$; the choice $s=1/2$ is the midpoint rule.  The horizons
were $T=200,400,800$, with corresponding late windows $120$--$200$,
$240$--$400$, and $480$--$800$.  Conservative transfer-grid comparisons
used $1024,2048,4096$, and $8192$ cells.

For the fixed pairs $(0,0)$, $(5,5)$, and $(4,10)$, the raw late values
are not resolved under refinement.  By contrast, the finite Ces\`aro and
late-window means are substantially more stable.  The envelopes obtained
by changing the three deterministic offsets are refinement diagnostics,
not confidence intervals.  The transfer discretization conserves the
signed mass to the observed numerical tolerance (the largest recorded
residual was $5.66\times10^{-15}$), but it also introduces artificial
diffusion and damping.  Moreover, horizon drift remains material for some
Ces\`aro averages, even where the late-window means change much less.

These are finite, conditional observations.  They do not establish
convergence of the raw ensemble sequence, a finite period, convergence of
the Ces\`aro averages as $T\to\infty$, a zero continuum off-diagonal limit,
or existence or uniqueness of an invariant or physical measure.  Nor do
they establish mixing or ergodicity at exactly $r=37/10$, equality among
ensemble, single-orbit time, Ces\`aro, and invariant-measure averages, or
operator-norm convergence.  In particular, finite stabilization under the
stated refinements is not a proof of an asymptotic limit.

\subsection{Exact fixed matrix elements at $r=4$}

At the endpoint, let
\begin{equation}
 p_0(u;4)=u,\qquad
 p_{n+1}(u;4)=4p_n(u;4)\bigl(1-p_n(u;4)\bigr),
\end{equation}
and, for fixed nonnegative $k,l$, define
\begin{equation}
 x_{kl}(n;4)
 =
 \int_0^1\phi_k(u)\,p_n(u;4)\,\phi_l(u)\,du .
 \label{eq:r4-matrix-element}
\end{equation}

\begin{theorem}{Theorem}
\label{thm:r4-fixed-elements}
Put $d=k+l$, $S_{kl}=\sqrt{(2k+1)(2l+1)}$, and let
\begin{equation}
 P_k(y)P_l(y)=\sum_{m=0}^{d}a_mT_m(y),
 \qquad
 A_{kl}=\sum_{m=0}^{d}|a_m|.
 \label{eq:r4-cheb-expansion}
\end{equation}
For every $n\geq0$, with $N=2^n$,
\begin{equation}
 x_{kl}(n;4)
 =
 \frac{(-1)^{k+l}S_{kl}}{4}
 \sum_{m=0}^{d}a_m
 \left[
 \mu_m-\frac{\mu_{m+N}+\mu_{|m-N|}}{2}
 \right],
 \label{eq:r4-exact-formula}
\end{equation}
where
\begin{equation}
 \mu_j=\int_{-1}^{1}T_j(y)\,dy
 =
 \begin{cases}
 0,&j\ \text{odd},\\[2mm]
 \displaystyle\frac{2}{1-j^2},&j\ \text{even}.
 \end{cases}
 \label{eq:r4-cheb-moments}
\end{equation}
Thus the case $j=1$ belongs to the odd branch and entails no singular
interpretation.  For $N>d$,
\begin{equation}
 \left|x_{kl}(n;4)-\frac12\delta_{kl}\right|
 \leq
 \frac{2S_{kl}A_{kl}}{3(N-d)^2},
 \label{eq:r4-explicit-bound}
\end{equation}
and consequently, for fixed $k,l$,
\begin{equation}
 x_{kl}(n;4)=\frac12\delta_{kl}+O_{kl}(4^{-n}).
 \label{eq:r4-fixed-asymptotic}
\end{equation}
\end{theorem}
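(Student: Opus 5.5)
The plan is to use the standard conjugacy of the $r=4$ logistic map to the doubling map on angles. Substitute $u=\sin^2(\theta/2)=(1-\cos\theta)/2$ with $\theta\in[0,\pi]$. Then $4u(1-u)=\sin^2\theta$, so by induction
\[
 p_n(u;4)=\frac{1-\cos(N\theta)}{2},\qquad N=2^n .
\]
Next write $y=\cos\theta=1-2u\in[-1,1]$. Then $\cos(N\theta)=T_N(y)$, and the same identity reads $p_n=(1-T_N(y))/2$. It is also valid for $n=0$, since $T_1(y)=y$.

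For the basis functions, $\phi_k(u)=\sqrt{2k+1}\,P_k(2u-1)=\sqrt{2k+1}\,P_k(-y)=(-1)^k\sqrt{2k+1}\,P_k(y)$, and $du=dy/2$ up to orientation. Hence
\[
 x_{kl}(n;4)=\frac{(-1)^{k+l}S_{kl}}{4}\int_{-1}^{1}P_k(y)P_l(y)\bigl(1-T_N(y)\bigr)\,dy .
\]
Insert the Chebyshev expansion (\ref{eq:r4-cheb-expansion}), which is finite because $P_kP_l$ has degree $d$. Then apply the product rule $T_mT_N=\tfrac12(T_{m+N}+T_{|m-N|})$. This gives (\ref{eq:r4-exact-formula}) with the moments $\mu_j$. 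The moments in (\ref{eq:r4-cheb-moments}) follow from $\int_0^\pi\cos(j\theta)\sin\theta\,d\theta$, which for $j\ne1$ equals $(1+(-1)^j)/(1-j^2)$, and equals $0$ for $j=1$. The factor $1/4$ comes from $1/2$ (the Jacobian) times $1/2$ (the $p_n$ formula).

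For the bound, separate the $N$-independent part. By the same change of variables,
\[
 \frac{(-1)^{k+l}S_{kl}}4\sum_m a_m\mu_m=\frac12\int_0^1\phi_k\phi_l\,du=\frac12\delta_{kl}
\]
by orthonormality. The remainder is
\[
 -\frac{(-1)^{k+l}S_{kl}}{8}\sum_m a_m\bigl(\mu_{m+N}+\mu_{|m-N|}\bigr).
\]
For $N>d$ and $0\le m\le d$, both indices $m+N$ and $N-m$ are at least $N-d\ge1$. For even $j\ge2$ we have $|\mu_j|=2/(j^2-1)$. For $j\ge2$, $j^2-1\ge\tfrac34 j^2$, so $|\mu_j|\le 8/(3j^2)\le 8/(3(N-d)^2)$.

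The edge case $N-d=1$ needs a separate check, since then $j=1$ may occur. But $\mu_1=0$, so the bound holds trivially there. Summing over both moments and all $m$ gives
\[
 \Bigl|x_{kl}(n;4)-\tfrac12\delta_{kl}\Bigr|\le\frac{S_{kl}}8\,A_{kl}\cdot 2\cdot\frac{8}{3(N-d)^2}=\frac{2S_{kl}A_{kl}}{3(N-d)^2},
\]
which is exactly (\ref{eq:r4-explicit-bound}). Since $(N-d)^{-2}\le 4N^{-2}$ once $N\ge2d$, and $N^{-2}=4^{-n}$, the asymptotic (\ref{eq:r4-fixed-asymptotic}) follows for fixed $k,l$.

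The main obstacle is bookkeeping rather than analysis. The constant $1/4$, the sign $(-1)^{k+l}$, the orientation reversal $u\mapsto y$, and the $j=1$ moment must all be tracked consistently. The constant $2/3$ also has to come out sharply; this relies on the elementary inequality $j^2-1\ge\tfrac34j^2$ for $j\ge2$ and on the parity vanishing of the odd moments.
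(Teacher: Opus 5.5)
Your proof is correct and follows essentially the same route as the paper. Both use the angle-doubling substitution giving $p_n=(1-T_N(y))/2$ with $y=1-2u$, Legendre parity, and the product rule $T_mT_N=\tfrac12(T_{m+N}+T_{|m-N|})$, then bound the remainder with $|\mu_j|\le 8/(3j^2)$ for even $j\ge2$ while odd moments vanish. The differences are cosmetic: you use the half-angle parametrization $u=\sin^2(\theta/2)$, and your $N\ge 2d$ step covers $d=0$ directly, where the paper checks that case separately.
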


\textit{Proof.}
Write $u=\sin^2\theta$, $0\leq\theta\leq\pi/2$.  The endpoint-inclusive
identity
\begin{equation}
 p_n(u;4)=\sin^2(2^n\theta)
 \label{eq:r4-conjugacy}
\end{equation}
holds for all $n\geq0$, including both endpoints.  Now set
$t=2\theta$, $y=\cos t$, and $N=2^n$.  Then
\[
 2u-1=-y,\qquad
 p_n(u;4)=\frac{1-T_N(y)}{2},\qquad
 du=\frac12\sin t\,dt .
\]
Using $P_j(-y)=(-1)^jP_j(y)$ and the normalization of the shifted
Legendre basis gives
\begin{equation}
 x_{kl}(n;4)
 =
 \frac{(-1)^{k+l}S_{kl}}{4}
 \int_{-1}^{1}P_k(y)P_l(y)\bigl(1-T_N(y)\bigr)\,dy .
 \label{eq:r4-transformed-integral}
\end{equation}
The finite expansion \eqref{eq:r4-cheb-expansion}, together with
\[
 T_m(y)T_N(y)
 =
 \frac{T_{m+N}(y)+T_{|m-N|}(y)}{2},
\]
therefore yields \eqref{eq:r4-exact-formula}.  Direct integration gives
\eqref{eq:r4-cheb-moments}: odd Chebyshev polynomials integrate to zero,
while the even formula includes $j=0$ and the apparently exceptional
$j=1$ is already covered by oddness.

The nonoscillatory part of \eqref{eq:r4-transformed-integral} is
\[
 \frac{(-1)^{k+l}S_{kl}}{4}
 \int_{-1}^{1}P_k(y)P_l(y)\,dy
 =\frac12\delta_{kl}
\]
by Legendre orthogonality.  If $q\geq2$, then
$|\mu_q|=2/(q^2-1)\leq8/(3q^2)$.  When $N>d$, both $m+N$ and
$|m-N|=N-m$ are at least $N-d$; any odd index contributes zero, and
the even nonzero indices are at least two.  Hence the oscillatory
remainder satisfies
\[
 \left|x_{kl}(n;4)-\frac12\delta_{kl}\right|
 \leq
 \frac{S_{kl}}{8}\sum_{m=0}^{d}|a_m|
 \left(|\mu_{m+N}|+|\mu_{N-m}|\right)
 \leq
 \frac{2S_{kl}A_{kl}}{3(N-d)^2}.
\]
If $d\geq1$ and $N\geq2d$, this is at most
$8S_{kl}A_{kl}/(3N^2)$, which is a constant times $4^{-n}$.
For $d=0$, the exact formula directly gives
$x_{00}(n;4)=\frac12-\mu_N/4$ and the same $N^{-2}$ conclusion.
This completes the proof.

The theorem concerns fixed matrix elements.  It does not imply
operator-norm convergence, uniformity when $k$ or $l$ grows with $n$,
correctness of a nonlinear iteration performed after finite projection,
or a density-matrix interpretation.  Algebraic Chebyshev moments provide
the reference numerical evaluation, while cosine-weighted adaptive
quadrature supplies an independent check.  Their agreement for the
examined fixed elements supports the implementation but is not part of the
proof.  In contrast, fixed global Gauss quadrature eventually
under-resolves the increasing oscillation and can produce apparent
collapse or revival that is a quadrature artifact.

\section{Finite numerical diagnostics across parameter regimes}

The fixed-parameter results above are supplemented with three explicitly
finite diagnostics.  Figure~\ref{fig:matrix-elements-time} first compares the common
selection $x_{00}(n;r)$, $x_{22}(n;r)$, $x_{55}(n;r)$, and
$x_{4,10}(n;r)$ over the four regimes treated above.

\begin{figure}[htbp]
 \centering
 \includegraphics[width=0.94\textwidth]{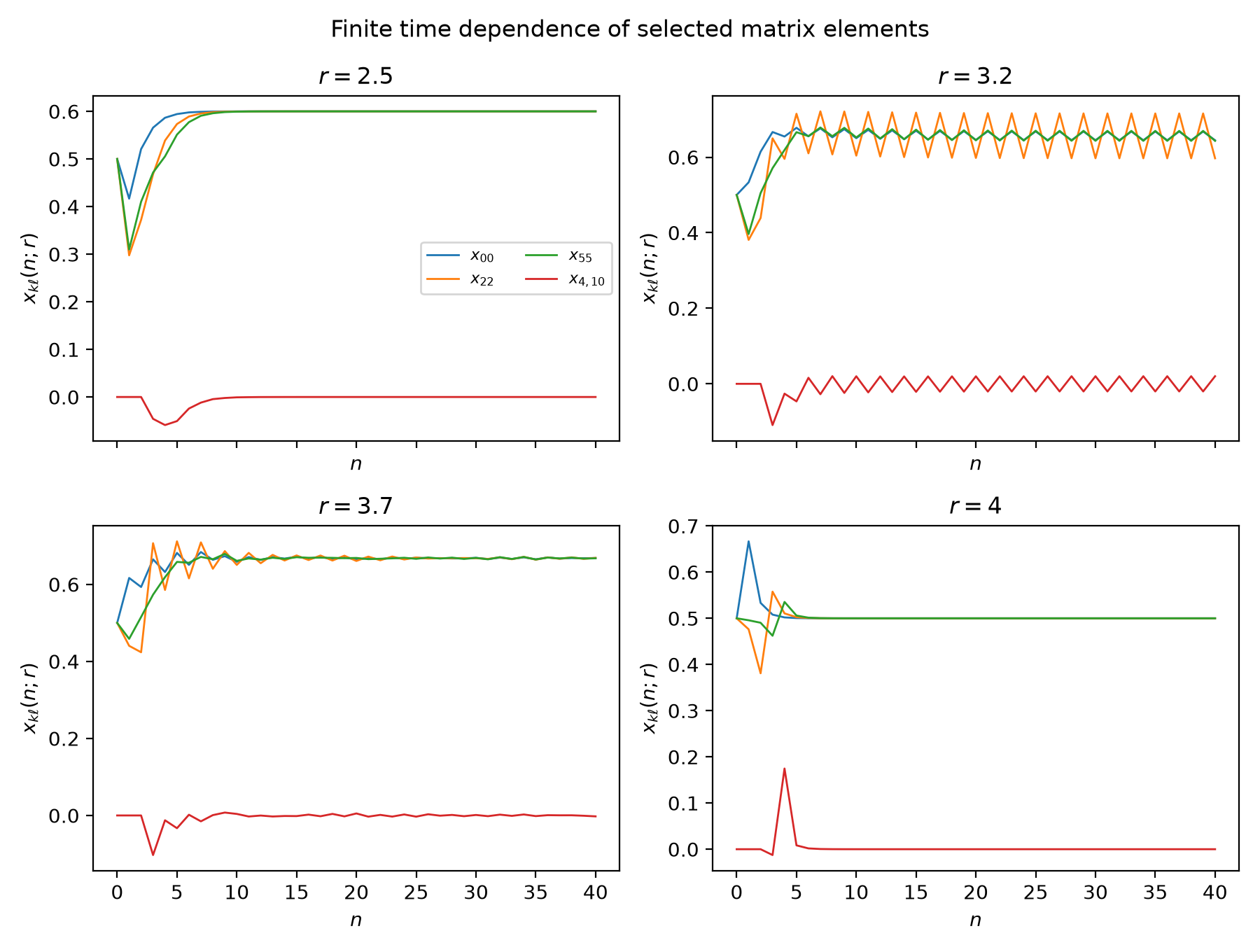}
 \caption{Finite time dependence of $x_{00}$, $x_{22}$, $x_{55}$, and
 $x_{4,10}$ for $r=5/2$, $16/5$, $37/10$, and $4$, displayed for
 $n=0,\ldots,40$.  The $r=5/2$ panel uses endpoint-partitioned composite
 Gauss quadrature, the $r=16/5$ panel uses the period-two boundary
 decomposition with composite Gauss quadrature, and the $r=4$ panel uses
 the exact Chebyshev-moment implementation.  These regular regimes are
 analytically controlled.  The $r=37/10$ panel is instead a deterministic
 finite equal-stratum diagnostic (principally $N=65536$, with a faint
 $N=131072$, offset-$1/4$ refinement envelope); no asymptotic limit of its
 raw sequence is claimed.}
 \label{fig:matrix-elements-time}
\end{figure}

The second diagnostic, presented in Figure~\ref{fig:matrix-bifurcation-x22},
is based on
\begin{equation}
 x_{22}(n;r)=\int_0^1\phi_2(u)p_n(u;r)\phi_2(u)\,du .
 \label{eq:x22-finite-diagnostic}
\end{equation}
The lower panel of Figure~\ref{fig:matrix-bifurcation-x22} reports only the
classical finite-orbit quantity $\lambda_T(r)$; it is not a distinct
``quantum Lyapunov exponent.''

\begin{figure}[htbp]
 \centering
 \includegraphics[width=0.82\textwidth]{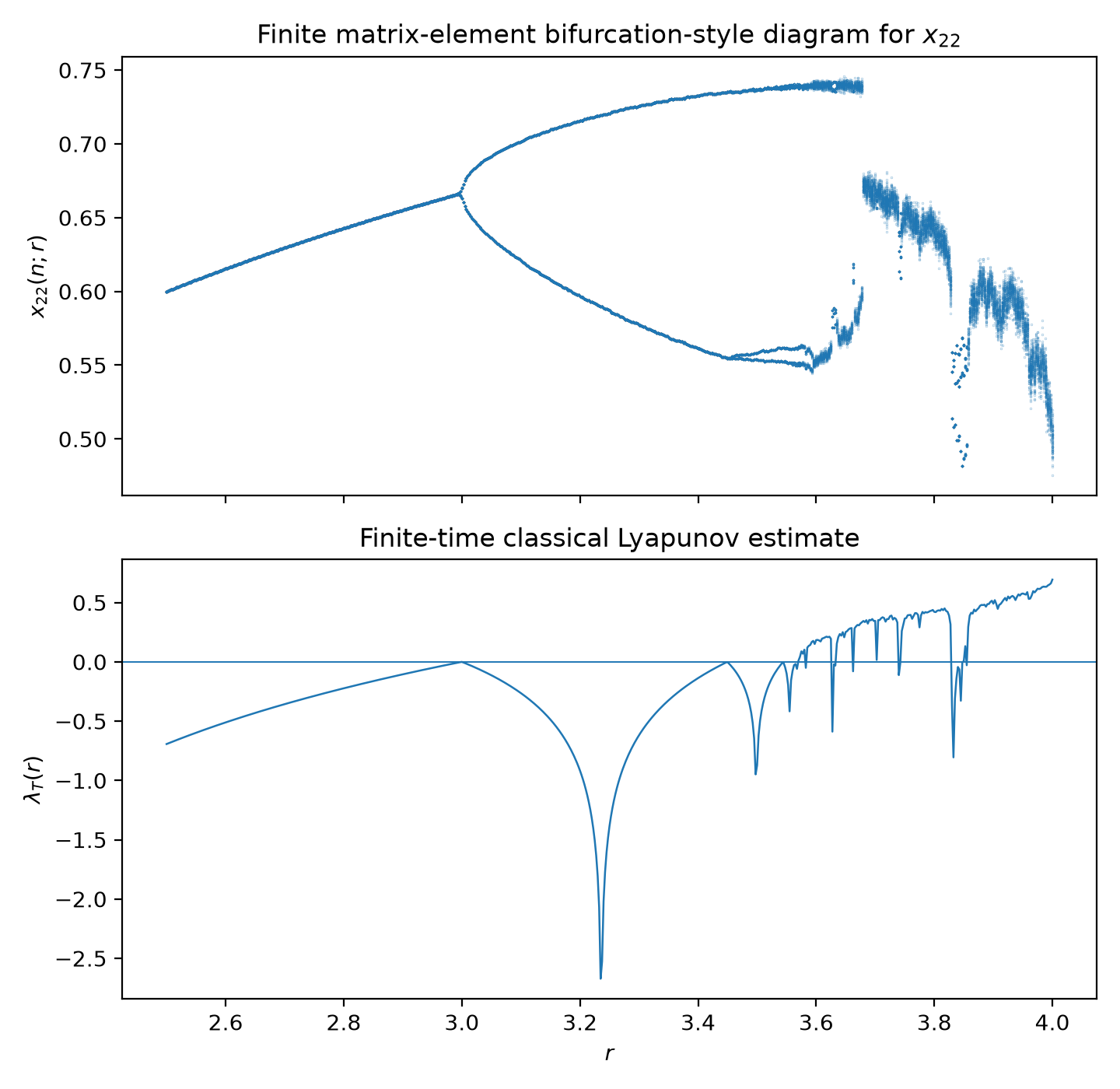}
 \caption{Upper panel: finite matrix-element bifurcation-style diagram for
 $x_{22}(n;r)$ on a grid of 601 equally spaced values of $r\in[2.5,4]$.
 Equal-stratum quadrature uses $N=8192$ and midpoint offset $s=1/2$; 500
 updates are discarded and updates 501--580 are retained.  Lower panel: one
 finite-time classical Lyapunov estimate $\lambda_T(r)$ from
 $x_0=0.123456789$, after 1000 discarded updates and over 2000 retained
 states.  The chaotic-region point cloud is a finite-resolution
 visualization, with no claim of continuum convergence.}
 \label{fig:matrix-bifurcation-x22}
\end{figure}

Finally, for a normalized state $\Psi$, define
\begin{equation}
 I_{\Psi}(n,r)=\int_0^1 |\Psi(u)|^2p_n(u;r)^2\,du,
 \qquad
 \overline{I}_{\Psi}(T,r)=\frac{1}{T+1}\sum_{n=0}^{T}I_{\Psi}(n,r).
 \label{eq:finite-mean-intensity}
\end{equation}
Figure~\ref{fig:mean-intensity-vs-r} uses $\Psi=\phi_0$, $T=400$, and
deterministic equal-stratum quadrature with $N=8192$ and midpoint offset
$s=1/2$.

\begin{figure}[htbp]
 \centering
 \includegraphics[width=0.96\textwidth]{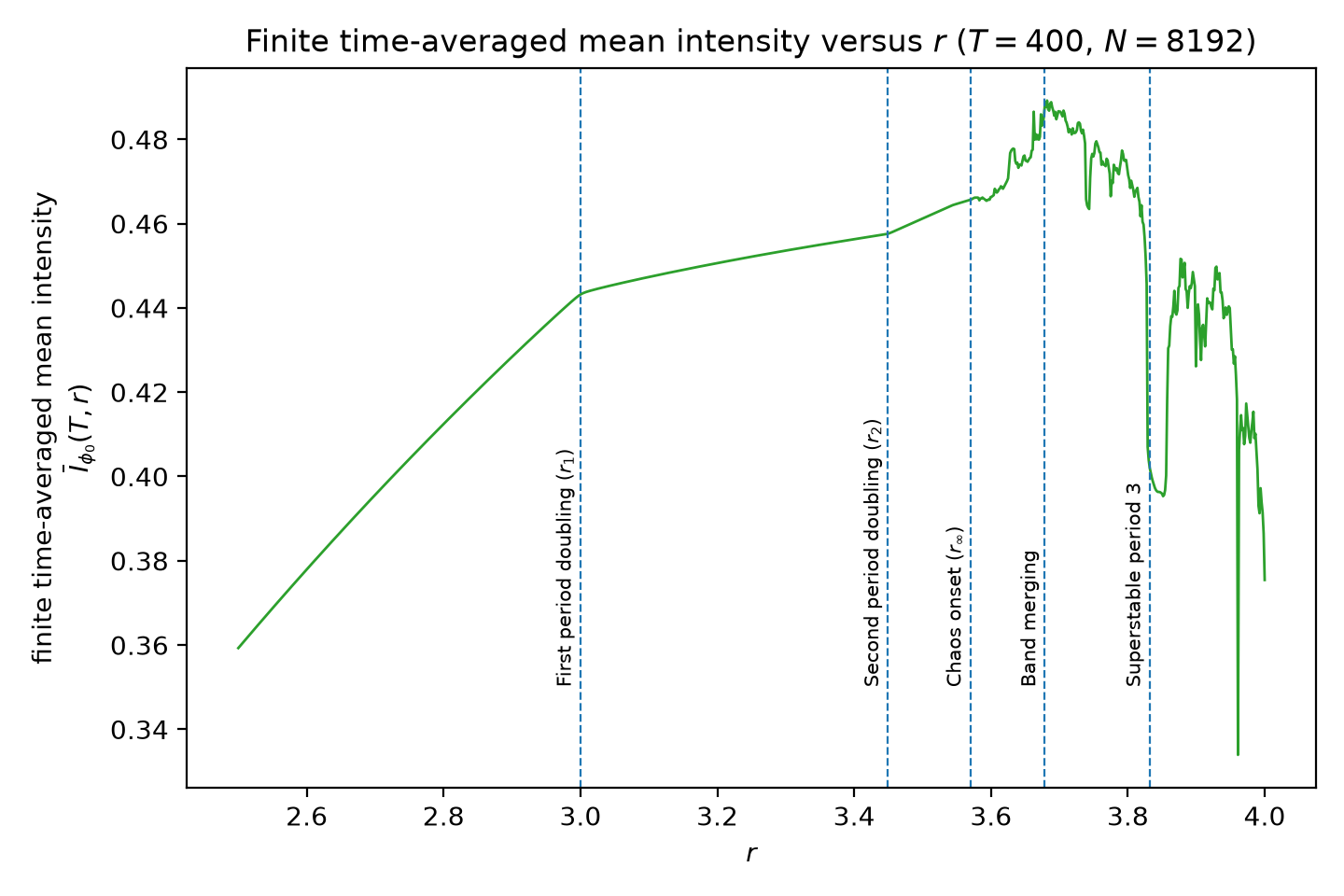}
 \caption{Finite time-averaged mean intensity
 $\overline{I}_{\phi_0}(T,r)$ for $T=400$ and $N=8192$.  The five vertical
 lines mark the first period doubling $r_1=3$, the second period doubling
 $r_2=1+\sqrt6$, chaos onset $r_\infty\simeq3.569945672$, the first
 band-merging point $r_{\mathrm{bm},1}\simeq3.6785735104$, and the
 superstable period-three parameter
 $r_{\mathrm{ss},3}\simeq3.8318740553$.  Fine structure in the chaotic
 regime is a finite-resolution numerical result; no infinite-time or
 continuum limit is claimed.}
 \label{fig:mean-intensity-vs-r}
\end{figure}

Because $X_n$ is multiplication by $p_n$, one may also define the finite
instantaneous moments
\begin{equation}
 \begin{aligned}
 M_{2,\Psi}(n,r)&=\langle\Psi,X_n^2\Psi\rangle
 =\int_0^1|\Psi(u)|^2p_n(u;r)^2\,du,\\
 M_{4,\Psi}(n,r)&=\langle\Psi,X_n^4\Psi\rangle
 =\int_0^1|\Psi(u)|^2p_n(u;r)^4\,du.
 \end{aligned}
 \label{eq:second-fourth-intensity-moments}
\end{equation}
and the instantaneous normalized second-order intensity moment
\begin{equation}
 G_\Psi^{(2)}(n,r)=\frac{M_{4,\Psi}(n,r)}{M_{2,\Psi}(n,r)^2}.
 \label{eq:instantaneous-normalized-second-order}
\end{equation}
Writing $\overline M_{j,\Psi}(T,r)=(T+1)^{-1}
\sum_{n=0}^{T}M_{j,\Psi}(n,r)$, the pooled finite-time quantity plotted in
Figure~\ref{fig:normalized-second-order-moment} is
\begin{equation}
 \overline G_\Psi^{(2)}(T,r)
 =\frac{\overline M_{4,\Psi}(T,r)}{\overline M_{2,\Psi}(T,r)^2}
 =1+\frac{\operatorname{Var}(p_n^2)}{\overline M_{2,\Psi}(T,r)^2}
 \geq1,
 \label{eq:pooled-normalized-second-order}
\end{equation}
where the variance is taken with respect to the normalized state density and
the uniform distribution on $n=0,\ldots,T$.  Thus this is a ratio of pooled
moments, not the arithmetic mean of the instantaneous ratios.

This diagnostic is only formally analogous to an optical $g^{(2)}$.  No
annihilation or creation operator, normally ordered photon correlator,
physical number operator, or photon-counting model has been introduced.
Consequently, terms such as ``sub-Poissonian statistics'' and
``antibunching'' would be only formal analogies and cannot occur here as
values below one.

\begin{figure}[htbp]
 \centering
 \includegraphics[width=0.96\textwidth]{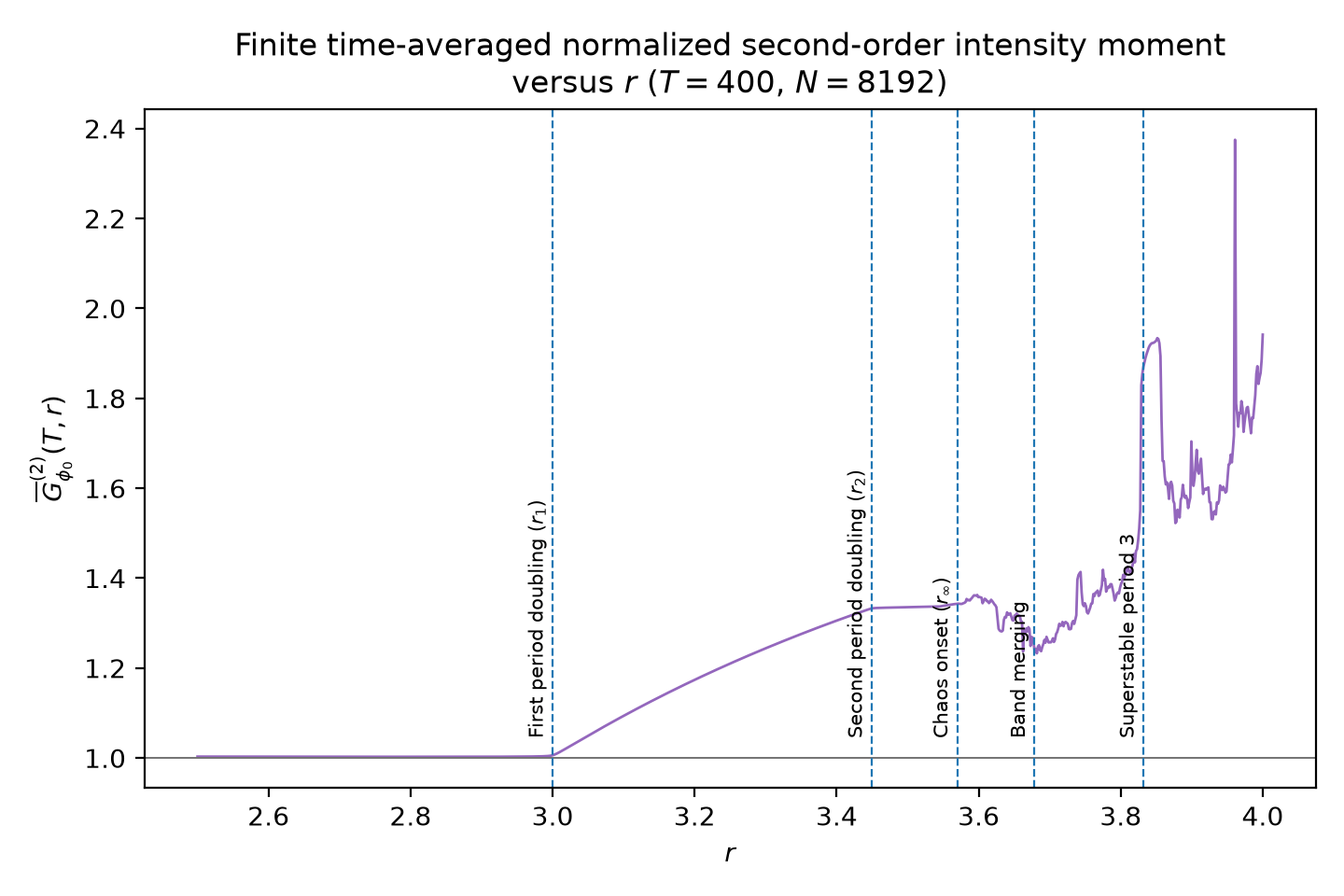}
 \caption{Finite time-averaged normalized second-order intensity moment
 $\overline G_{\phi_0}^{(2)}(T,r)$ for $\Psi=\phi_0$, $T=400$, deterministic
 equal-stratum quadrature with $N=8192$ and midpoint offset $s=1/2$, and
 1001 equally spaced values on $r\in[2.5,4]$.  The vertical lines mark the
 first period doubling $r_1=3$, second period doubling $r_2=1+\sqrt6$,
 chaos onset $r_\infty\simeq3.569945672$, first band-merging point
 $r_{\mathrm{bm},1}\simeq3.6785735104$, and superstable period-three
 parameter $r_{\mathrm{ss},3}\simeq3.8318740553$.  The curve is a finite
 deterministic numerical result and its chaotic fine structure is
 finite-resolution; no infinite-time, continuum, or photon-statistical
 conclusion is claimed.}
 \label{fig:normalized-second-order-moment}
\end{figure}

Finally, a normalized two-time commutator correlation matrix is considered.
Out-of-time-order correlators originated in a quasiclassical setting and
have since become widely used diagnostics of quantum-chaotic growth and
scrambling~\cite{LarkinOvchinnikov,MSS,Rozenbaum2017,ShenkerStanford,Swingle}.
Let $\Pi=-i\,d/du$ denote the formal conjugate differential operator.  Since
its realization on $[0,1]$ requires a domain choice, the following identity
is used only on a suitable common smooth test domain in the open interval:
\begin{equation}
 [X_n,\Pi]=iM_{p_n'}.
 \label{eq:scalar-commutator-identity}
\end{equation}
For a normalized state $\Psi$, define
\begin{align}
 C_{n\ell}(r;\Psi)
 &=\langle\Psi,[X_n,\Pi]^\dagger[X_\ell,\Pi]\Psi\rangle
 =\int_0^1|\Psi(u)|^2p_n'(u;r)p_\ell'(u;r)\,du,
 \label{eq:scalar-commutator-correlation}\\
 \rho_{n\ell}(r;\Psi)
 &=\frac{C_{n\ell}(r;\Psi)}
 {\sqrt{C_{nn}(r;\Psi)C_{\ell\ell}(r;\Psi)}}.
 \label{eq:scalar-normalized-commutator-correlation}
\end{align}
Thus $C$ is a two-time commutator correlation matrix and $\rho$ is the
normalized Gram matrix of the weighted amplitudes $p_n'$.  When the diagonal
normalizations are positive, $\rho$ is symmetric with unit diagonal,
$|\rho_{n\ell}|\leq1$, and is positive semidefinite.  Independent positive
rescaling of each derivative vector leaves $\rho$ unchanged; this permits the
stable tangent-shape normalization used for
Figure~\ref{fig:scalar-otoc-normalized} without reconstructing exponentially
large discarded amplitudes.

This matrix may also be called a normalized scalar OTOC-type matrix, but it is
not asserted to be the unique or standard four-point OTOC convention.  The
calculation is finite in $n,\ell$ and in quadrature resolution.  Its displayed
geometry is a finite numerical observation, not evidence for an asymptotic
OTOC limit, quantum chaos, many-body scrambling, or operator growth in a
many-body sense.

\begin{figure}[htbp]
 \centering
 \includegraphics[width=\textwidth]{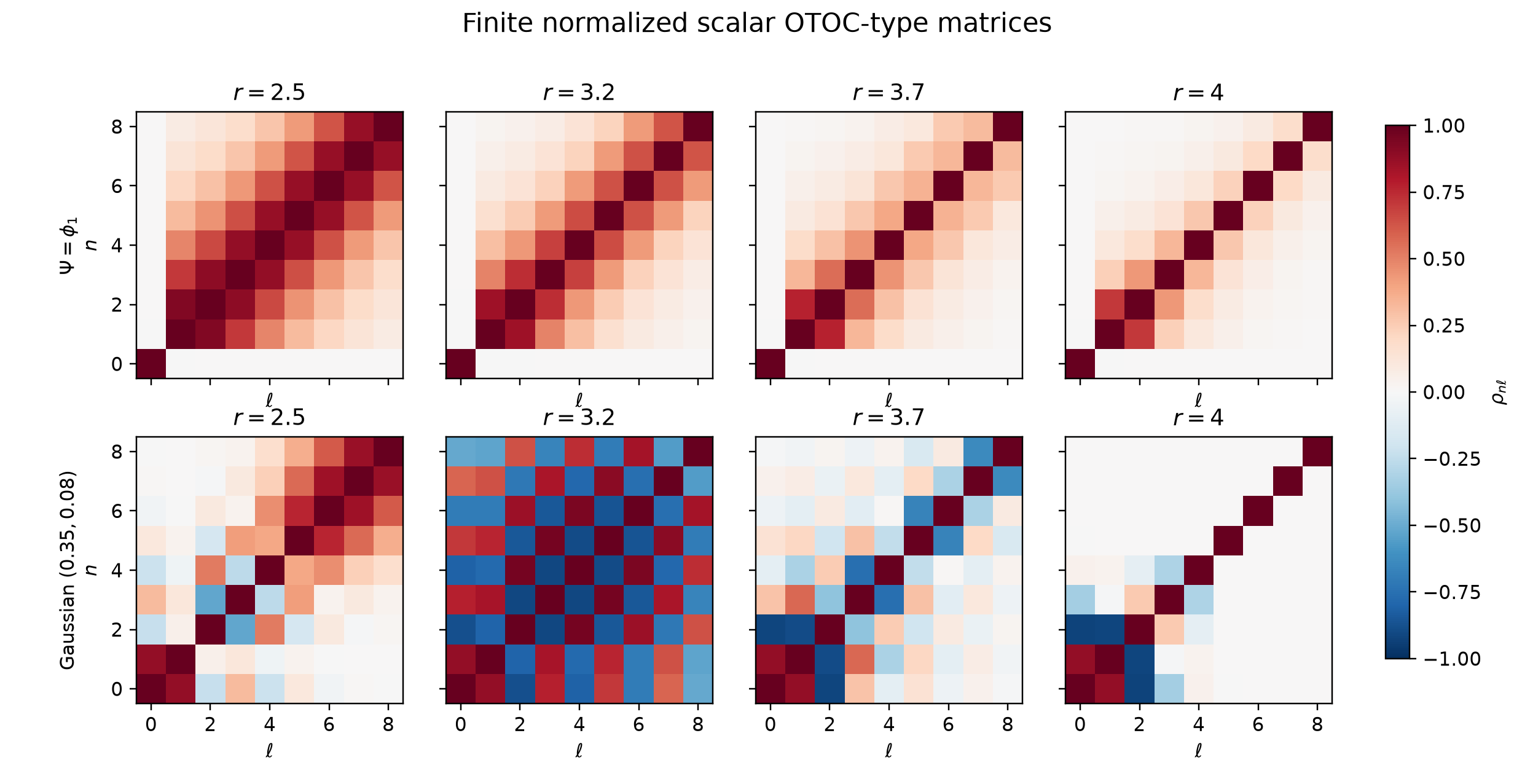}
 \caption{Finite normalized scalar OTOC-type matrices
 $\rho_{n\ell}(r;\Psi)$ for $\Psi=\phi_1$ (top row) and the normalized
 Gaussian density with center $0.35$ and width $0.08$ (bottom row), at
 $r=5/2,16/5,37/10$, and $4$.  The displayed indices are
 $n,\ell=0,\ldots,8$.  Each panel uses deterministic equal-stratum
 quadrature with $N=2^{18}$ and midpoint offset $s=1/2$; offsets
 $1/4,1/2,3/4$ were included in the resolution audit.  All panels use the
 shared scale $[-1,1]$ and were computed as normalized derivative-vector
 Gram matrices.  The patterns are finite-resolution observations; no
 continuum or asymptotic limit, many-body scrambling, quantum-chaos, or
 state-independence conclusion is claimed.}
 \label{fig:scalar-otoc-normalized}
\end{figure}

\FloatBarrier

\section{Finite-dimensional operator-valued extension}
\label{sec:finite-operator-extension}

The separate finite-matrix recursion
\begin{equation}
 X_{k+1}=R X_k(I-X_k)R^\dagger,
 \qquad X_0=P_D M_uP_D,\qquad D=32.
 \label{eq:operator-logistic}
\end{equation}
is now considered.  It is not the multiplication-operator sequence
$X_n=p_n(M_u)$ in Eq.~\eqref{eq:multiplication-operator-iterates}.
Every statement in this section concerns only the displayed dimension and
finite horizon $k=0,\ldots,40$.  Neither an infinite-dimensional limit nor a general boundedness or
asymptotic-stability theorem is established.  No clipping,
eigenvalue projection, damping, or other numerical stabilization is applied.
A trajectory is stopped before storing a non-finite iterate or one with
$\|X_k\|_2>10^{12}$.

For basis indices $n=0,\ldots,D-1$, the three real amplitude profiles are
\begin{align}
 d_A(n)&=2e^{-\gamma_A n}, & \gamma_A&=0.10,\nonumber\\
 d_B(n)&=2e^{-\gamma_B n}\cos(\omega n),
   & \gamma_B&=0.08,\quad \omega=\pi/4,\label{eq:operator-r-profiles}\\
 d_C(n)&=2e\gamma_C n e^{-\gamma_C n}, & \gamma_C&=0.12.\nonumber
\end{align}
Here $d_n$ is an amplitude of $R$, while $|d_n|^2$ is its local quadratic
scale; neither is, in general, a scalar logistic parameter.

\begin{figure}[htbp]
 \centering
 \includegraphics[width=0.98\textwidth]{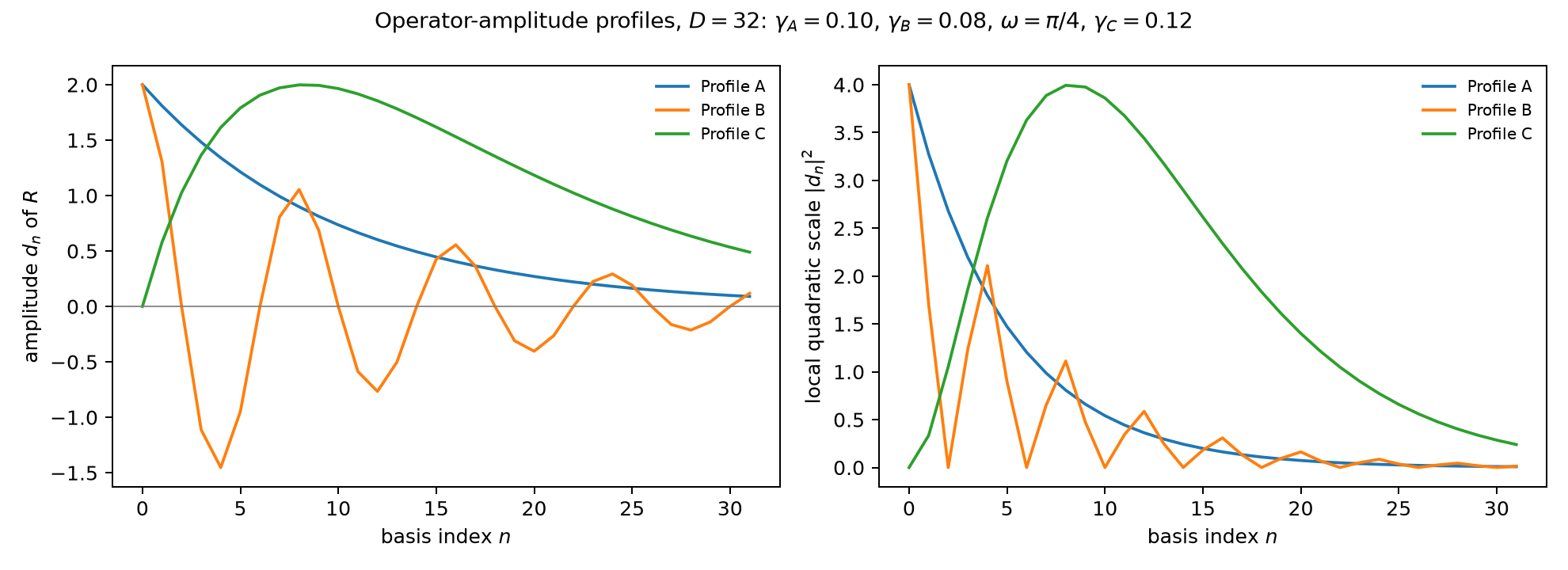}
 \caption{The three operator-amplitude profiles in
 Eq.~\eqref{eq:operator-r-profiles} for $D=32$ (left) and their squared
 magnitudes (right).  Parameters are $\gamma_A=0.10$, $\gamma_B=0.08$,
 $\omega=\pi/4$, and $\gamma_C=0.12$.  The quantities $d_n$ are amplitudes
 of $R$; $|d_n|^2$ are local quadratic scales.}
 \label{fig:operator-r-profiles}
\end{figure}

\subsection{Diagonal and tridiagonal reconnaissance}

For the diagonal construction $R=\operatorname{diag}(d_0,\ldots,d_{D-1})$,
all three trajectories complete through $k=40$.  With the normalized maximum
trajectory distance defined in the numerical audit, the pairwise values are
$D_{AB}=0.37114$, $D_{AC}=0.82778$, and $D_{BC}=0.83281$; hence all three
pairs are classified as clearly distinct under the predeclared threshold
$D_{PQ}\geq0.25$.  The computed spectra remain within approximately
$[0,1]$, apart from negative residuals no larger than $6.7\times10^{-17}$
in magnitude.  The finite spectral, bandwidth, and successive-commutator
diagnostics visibly differ, but these observations are not asymptotic results.

The same trajectories were evaluated in the shifted-Legendre probe
$\Psi_F=\phi_1$ and in the normalized $D$-mode projection of the Gaussian
wave packet with center $0.35$ and width $0.08$.  Their finite expectation
ranges are
\begin{center}
\begin{tabular}{c|cc}
 profile & $\langle X_k\rangle_{\phi_1}$ &
 $\langle X_k\rangle_{\rm Gaussian}$\\ \hline
 A & $[0.327492,0.818330]$ & $[0.320681,0.729240]$\\
 B & $[0.170429,0.500000]$ & $[0.088750,0.487102]$\\
 C & $[0,0.500000]$ & $[0.164438,0.350002]$
\end{tabular}
\end{center}

For the mode-mixing construction, the entries are chosen as
\begin{equation}
 R_{n,n+1}=R_{n+1,n}
 =\epsilon\sqrt{|d_n d_{n+1}|}\,\sgn(d_n+d_{n+1}),
 \qquad \sgn(0)=0,
 \label{eq:operator-r-tridiagonal}
\end{equation}
in addition to the diagonal entries $R_{nn}=d_n$.  The dimensionless
$\epsilon$ is a nearest-neighbour coupling-strength parameter introduced for
this extension: $\epsilon=0$ gives the diagonal construction, and it is not a
scalar logistic parameter.  The principal reconnaissance uses
$\epsilon=0.10$.

At this coupling, profile C crosses the norm threshold at $k=9$ (last stored
$k=8$), profile A crosses at $k=13$ (last stored $k=12$), and profile B
crosses at $k=16$ (last stored $k=15$).  Before termination the iterates
develop large negative eigenvalues, rapidly increasing spectral norms, and
rapidly increasing successive commutator norms.  This is finite-horizon
instability for these particular matrices and parameters; it does not prove
instability for all tridiagonal $R$.

Because $X_0$ is already tridiagonal, its total off-diagonal fraction cannot
measure newly generated mode spreading.  Instead, the following quantity is audited:
\begin{equation}
 \eta_k^{(>1)}=
 \frac{\left(\sum_{|i-j|>1}|(X_k)_{ij}|^2\right)^{1/2}}
      {\|X_k\|_F}.
 \label{eq:beyond-initial-band}
\end{equation}
The combined $\gamma$--$\epsilon$ scan contains finite parameter cells that
complete through $k=40$.  Among those cells,
$\max_{1\leq k\leq40}\eta_k^{(>1)}$ ranges from $0.11082$ to $0.57302$,
so their finite trajectories retain nonzero weight outside the initial band.
The scan remains a parameter-selection diagnostic and does not establish
asymptotic stability.
Entropy-based summaries could provide a separate mode-spreading diagnostic,
but their definition and interpretation would require care, including the
distinction between R{\'e}nyi and Shannon entropies~\cite{ZyczkowskiEntropy}.

\begin{figure}[htbp]
 \centering
 \includegraphics[width=0.98\textwidth]{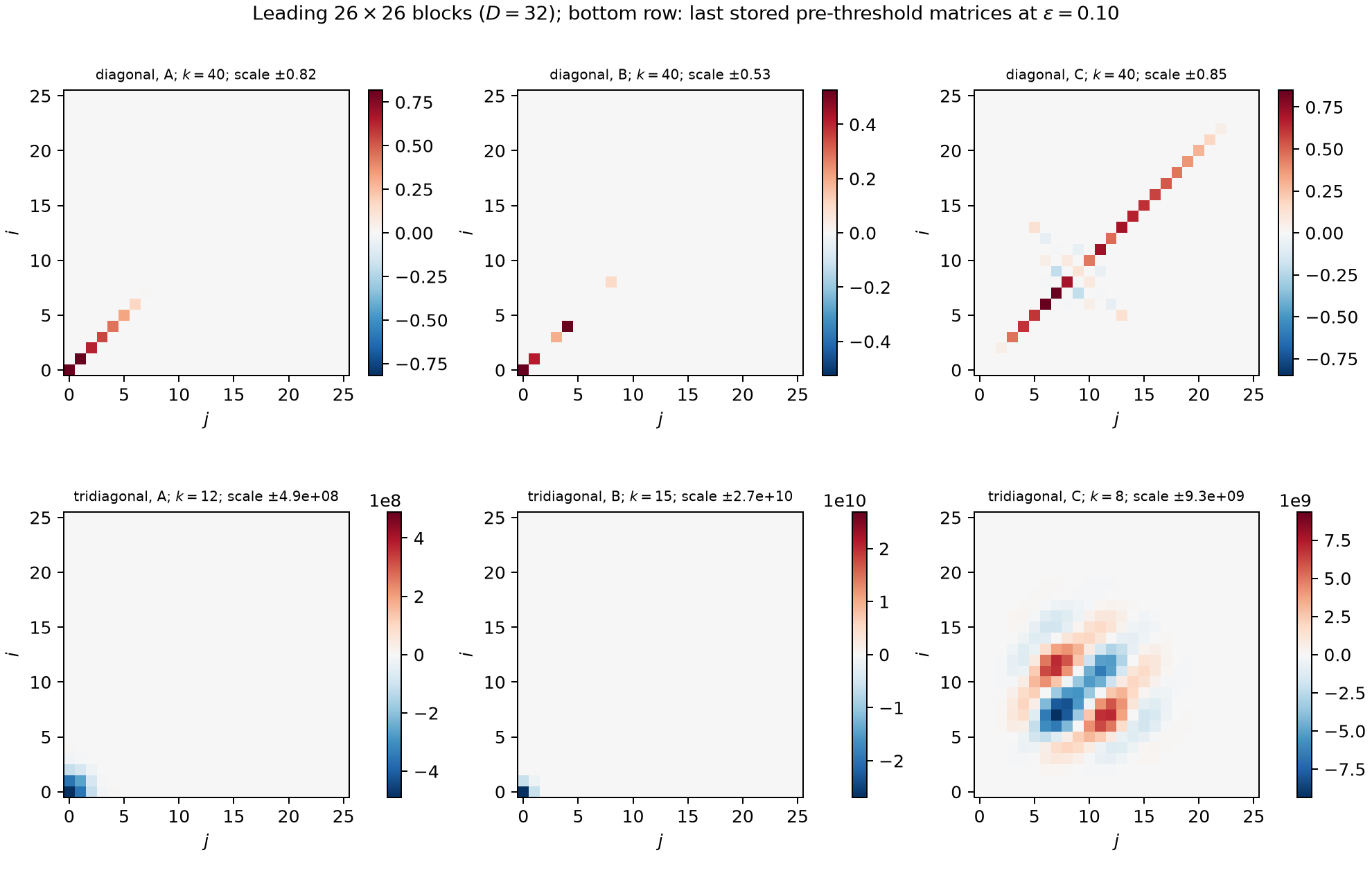}
 \caption{Leading $26\times26$ blocks of the $D=32$ Hermitian iterates.
 The top row shows diagonal profiles A, B, and C at $k=40$.  The bottom row
 shows tridiagonal profiles A, B, and C for $\epsilon=0.10$ at their last
 stored pre-threshold steps $k=12,15,8$, respectively.  Each panel has its
 own symmetric color normalization and states its numerical scale; matrix
 entries span very different amplitudes, so colors are not directly
 comparable between panels.  The bottom row visualizes finite pre-threshold
 structure, not bounded long-time dynamics.}
 \label{fig:operator-final-heatmaps}
\end{figure}

\FloatBarrier

\subsection{Outlook}

Natural next steps are dimension refinement, controlled
$\gamma$--$\epsilon$ parameter studies, sufficient conditions for preserving
$0\leq X_k\leq I$, and rigorous boundedness criteria.  Stronger definitions
of operator OTOCs require a specified operator domain and finite-dimensional
approximation, while any infinite-dimensional version of
Eq.~\eqref{eq:operator-logistic} would require a separate convergence and
domain analysis.

\FloatBarrier

\section{Concluding remarks}

The main construction associates the scalar logistic iterates $p_n$ with
the multiplication operators $X_n=p_n(M_u)$ on $L^2([0,1])$, where $M_u$
is multiplication by the coordinate.  The fixed matrix elements of these
operators in the normalized shifted-Legendre basis have been studied.  This
formulation preserves the scalar functional dynamics while making the mode
of every convergence statement explicit.

Three regimes admit analytical conclusions.  For $r=5/2$, scalar convergence
to the attracting fixed point holds almost everywhere and dominated
convergence gives $x_{kl}(n;5/2)\to3\delta_{kl}/5$ for every fixed pair
$k,l$.  For $r=16/5$, the exact phase-basin decomposition of the attracting
two-cycle gives distinct even and odd fixed-element limits and their sum
$21\delta_{kl}/16$.  For $r=4$, the conjugacy to angle doubling leads to an
exact finite Chebyshev-moment representation and the fixed-element estimate
$x_{kl}(n;4)=\delta_{kl}/2+O_{kl}(4^{-n})$.  None of these results is an
operator-norm convergence statement, nor are the estimates uniform over
basis indices that grow with time.

The remaining scalar results are finite-resolution, finite-time diagnostics.
At $r=37/10$, deterministic grid, offset, and horizon refinements show that
finite Ces\`aro and specified late-window averages are better resolved than raw
late matrix elements, without establishing an invariant-measure or
infinite-time result.  The numerical presentation also includes selected
matrix-element time series, a bifurcation-style $x_{22}$ diagram with one
classical finite-time Lyapunov estimate, a finite time-averaged mean
intensity, a pooled normalized second-order intensity moment, and normalized
scalar OTOC-type Gram matrices.  Their stated quadrature resolutions and
horizons are part of their definitions as reported here.

The separate finite-dimensional recursion
$X_{k+1}=R X_k(I-X_k)R^\dagger$ was also explored at $D=32$.  Three pairwise distinct diagonal
profiles completed through $k=40$.  With the selected tridiagonal profiles
and coupling $\epsilon=0.10$, the numerical norm threshold was crossed at
finite steps.  A combined $\gamma$--$\epsilon$ scan nevertheless contains
finite parameter cells completing through $k=40$ while retaining weight
outside the initial tridiagonal band.  These observations prove neither
asymptotic boundedness nor instability, and they provide no
infinite-dimensional limit.

Natural next steps are dimension refinement, rigorous quadrature and
continuum-control analysis, sufficient conditions for preserving
$0\leq X_k\leq I$, and boundedness criteria for the matrix recursion.
Operator OTOCs require a specified operator domain, and any
infinite-dimensional formulation requires mathematically controlled
approximations and convergence arguments.  Such work should continue to
separate finite-horizon completion from asymptotic stability, fixed-dimension
evidence from continuum conclusions, and numerical observations from proved
statements.

\section*{Acknowledgment}

During the preparation of this work, generative AI tools were used
in two distinct capacities. First, AI-assisted code generation
(ChatGPT Work and Codex) was used for creation of Python routines;
all final implementation was written, inspected, and verified by the
authors. Second, a large language model (ChatGPT Work) was used to assist
with prose editing and language refinement at the manuscript stage.
The authors reviewed and took responsibility for all content thus
produced. No AI tool is listed as an author; all authors accept full
responsibility for the scientific content of this work, consistent
with both arXiv and Reports on Mathematical Physics policies.

\bibliography{references_checked}

\end{document}